\newtheorem{thm}{Theorem}[section]
\newtheorem{cor}[thm]{Corollary}
\newtheorem{lem}[thm]{Lemma}
\newtheorem{pro}[thm]{Proposition}
\theoremstyle{definition}
\newtheorem{exa}{Example}
\newtheorem{rem}{Remark}
\newtheorem{obs}{Observation}
\begin{document}

\title{Spectral radius and clique partitions of graphs}
\author[Jiang Zhou]{Jiang Zhou}
\author[Edwin R. van Dam]{Edwin R. van Dam}
\address[Jiang Zhou]{zhoujiang@hrbeu.edu.cn, College of Mathematical Sciences, Harbin Engineering University, Harbin 150001, PR China}
\address[Edwin R. van Dam]{Edwin.vanDam@uvt.nl, Department of Econometrics and O.R., Tilburg University, The Netherlands}

\begin{abstract}
We give lower bounds on the size and total size of clique partitions of a graph in terms of its spectral radius and minimum degree, and derive a spectral upper bound for the maximum number of edge-disjoint $t$-cliques. The extremal graphs attaining the bounds are exactly the block graphs of Steiner $2$-designs and the regular graphs with $K_t$-decompositions, respectively.
\end{abstract}

\subjclass[2010]{05C50, 05C70, 05B20}

\keywords{Spectral radius, Clique partition, Clique partition number, Steiner $2$-design}

\maketitle

\section{Introduction}
A clique partition of a graph $G$ is a set of complete subgraphs of $G$ that partition the edge set of $G$. The clique partition number $\mbox{\rm cp}(G)$ is the minimum number of cliques in a clique partition of $G$. In 1948, De Bruijn and Erd\H{o}s \cite{Bruijn} proved that the clique partition number of the complete graph $K_n$ is at least $n$ (where the trivial partition of size $1$ is excluded) and characterized the clique partitions of size precisely $n$.

Bounds, asymptotic values, and explicit values of the clique partition number of some other dense graphs were given by (among others) Conlon, Fox, and Sudakov \cite{Conlon}, Erd\H{o}s, Faudree, and Ordman \cite{ErdosFaudree}, Orlin \cite{Orlin}, and Wallis \cite{Wallis}. An overview of early results can be found in the survey paper by Monson, Pullman, and Rees \cite{MPR} (from 1995). For complexity results and algorithms for the (NP-complete problem of) determination of the clique partition problem, we refer to the thesis of Cioab\u{a} \cite{Cioabathesis}. Nordhaus-Gaddum type results have been obtained by de Caen, Erd\H{o}s, Pullman, and Wormald \cite{Dom}.

Hoffman \cite{Hoffman} proved that for graphs without isolated vertices ${\mbox{\rm cp}(G)(\mbox{\rm cp}(G)+1)}/{2}$ is at least the number of eigenvalues of $G$ which are not $-1$, thus giving a bound involving the spectrum of the graph. He also proved that there exist graphs with the same spectrum having (very) different clique partition numbers (for other NP-complete problems with this property, see recent work of Etesami and Haemers \cite{EHaemers}). More recently, Cioab\u{a}, Elzinga, and Gregory \cite{Cioaba0} also published a spectral bound involving clique partitions.

Given a clique partition $\mathcal{Q}$ of $G$, its size is the number of elements of $\mathcal{Q}$. Here we are mainly interested in clique partitions in which every clique has size at most a given number $t$. The minimum size of such clique partitions is denoted by $\mbox{\rm cp}^{(t)}(G)$. For a clique partition $\mathcal{Q}$ of $G$, the total size of $\mathcal{Q}$ is the sum of the sizes of the elements in $\mathcal{Q}$. The minimum total size of a clique partition of $G$ is denoted by $\pi(G)$. When we restrict to cliques of size at most $t$, the corresponding minimum total size is denoted by $\pi_t(G)$. Some upper bounds on $\mbox{\rm cp}^{(3)}(G)$, $\pi(G)$ and $\pi_3(G)$ were given by Chung \cite{Chung}, Erd\H{o}s, Goodman, and P\'{o}sa \cite{Erdos}, Gy\H{o}ri and Kostochka \cite{Gyori}, Kahn \cite{Kahn}, and Kr\'{a}l', Lidick\'{y}, Martins, and Pehova \cite{Kral}.

We focus on the relation between the spectral radius $\rho(G)$, $\mbox{\rm cp}^{(t)}(G)$, and $\pi_t(G)$. In Section 2, we present definitions, lemmas, and known results. In Section 3, we give a spectral lower bound for $\mbox{\rm cp}^{(t)}(G)$. The extremal graphs are exactly the block graphs of Steiner $2$-designs. In Section 4, we give a spectral lower bound for $\pi_t(G)$, and derive a spectral upper bound for the maximum number of edge-disjoint $t$-cliques. The extremal graphs are exactly the regular graphs with $K_t$-decompositions (that is, a partition of the edge set into cliques of size $t$). In Section 5, we compare our bounds on the clique partition number to Hoffman's bound for several classes of graphs. In certain cases, our bounds turn out to be considerably better; see Table \ref{table1}.

\section{Preliminaries}

\subsection{Spectra of graphs}

Let $V(G)$ and $E(G)$ denote the vertex set and the edge set of a graph $G$, respectively. The \textit{adjacency matrix} $A(G)$ of $G$ is a $|V(G)|\times|V(G)|$ symmetric matrix with entries
\begin{eqnarray*}
(A(G))_{ij}=\begin{cases}1~~~~~~~~~~~~~~~~~~\mbox{if}~\{i,j\}\in E(G),\\
0~~~~~~~~~~~~~~~~~~\mbox{if}~\{i,j\}\notin E(G).\end{cases}
\end{eqnarray*}
Eigenvalues of $A(G)$ are called \textit{eigenvalues} of $G$. The largest eigenvalue of $G$ is called the \textit{spectral radius} of $G$, denoted by $\rho(G)$. When $G$ is connected, there are positive eigenvectors for the spectral radius, called Perron eigenvectors.
The following is a well-known result in spectral graph theory.
\begin{lem}\label{lem2.3}
\textup{\cite{CveRS}} Let $G$ be a connected graph with $n$ vertices. Then $\rho(G)\leq n-1$, with equality if and only if $G$ is the complete graph $K_n$.
\end{lem}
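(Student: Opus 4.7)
The plan is to establish the general bound $\rho(G)\le\Delta(G)$, where $\Delta(G)$ is the maximum degree of $G$, and then use $\Delta(G)\le n-1$ for a simple graph on $n$ vertices. Since $G$ is connected, the Perron--Frobenius theorem guarantees a positive eigenvector $x$ with $A(G)x=\rho(G)x$. I would pick a vertex $v$ with $x_v=\max_u x_u$ and read off the eigenvalue equation at $v$ to obtain
\[
\rho(G)\,x_v \;=\; \sum_{u\sim v} x_u \;\le\; \deg(v)\,x_v \;\le\; (n-1)\,x_v,
\]
after which dividing by $x_v>0$ yields $\rho(G)\le n-1$.

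For the equality characterization, suppose $\rho(G)=n-1$. Then both inequalities above are tight at $v$: first, $\deg(v)=n-1$, so $v$ is adjacent to every other vertex of $G$; second, $x_u=x_v$ for every neighbor $u$ of $v$, and hence for every vertex of $G$, so $x$ is a positive constant vector. Substituting this back into $A(G)x=\rho(G)x$ forces every row sum of $A(G)$ to equal $n-1$, i.e., every vertex has degree $n-1$, so $G=K_n$. The converse is immediate since $A(K_n)=J-I$ has spectral radius $n-1$.

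The main (and only minor) obstacle is making sure that tightness at a single maximizing coordinate propagates to global regularity; the argument above sidesteps any iterative ``spreading'' argument by exploiting the fact that the tight vertex $v$ is already adjacent to \emph{all} other vertices once $\deg(v)=n-1$, so the equalities $x_u=x_v$ for $u\sim v$ instantly cover the whole vertex set and connectedness is needed only to justify positivity of the Perron eigenvector.
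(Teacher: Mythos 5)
Your proof is correct. Note that the paper does not prove this lemma at all --- it is quoted from the textbook reference \cite{CveRS} --- so there is no in-paper argument to compare against; your write-up is the standard self-contained proof (bound the spectral radius by the maximum degree via the eigenvalue equation at a coordinate of maximum value in a positive Perron eigenvector, then trace through the equality conditions), and all steps, including the observation that $\deg(v)=n-1$ makes the constancy of $x$ propagate to all vertices in one step, are sound.
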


For a real symmetric matrix $A$ of order $n$, let $\lambda_1(A)\geq\cdots\geq\lambda_n(A)$ denote all eigenvalues of $A$.
\begin{lem}\label{lem2.4}
\textup{\cite{Horn}} Let $A$ and $B$ be $n\times n$ real symmetric matrices. Then
\begin{eqnarray*}
\lambda_i(A+B)\leq\lambda_j(A)+\lambda_{i+1-j}(B)~(1\leq j\leq i\leq n),\\
\lambda_i(A+B)\geq\lambda_j(A)+\lambda_{i+n-j}(B)~(1\leq i\leq j\leq n).
\end{eqnarray*}
In either of these inequalities, equality holds if and only if there exists a vector that is an eigenvector to each of the three involved eigenvalues.
\end{lem}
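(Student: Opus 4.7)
The plan is to invoke the Courant--Fischer min--max characterization of eigenvalues of a real symmetric matrix $M$:
\[
\lambda_k(M) = \min_{\dim S = n-k+1}\ \max_{0 \ne x \in S} \frac{x^{T} M x}{x^{T} x}.
\]
For the first (upper) inequality I would choose a test subspace built from the spectral data of $A$ and $B$. Let $U$ be the span of eigenvectors of $A$ for the eigenvalues $\lambda_j(A),\ldots,\lambda_n(A)$, and $W$ the span of eigenvectors of $B$ for $\lambda_{i+1-j}(B),\ldots,\lambda_n(B)$. Then $\dim U = n-j+1$ and $\dim W = n-i+j$, and on these subspaces one has the Rayleigh-quotient bounds $x^{T} A x \le \lambda_j(A)\, x^{T} x$ and $x^{T} B x \le \lambda_{i+1-j}(B)\, x^{T} x$ respectively. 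A dimension count gives $\dim(U \cap W) \ge (n-j+1)+(n-i+j)-n = n-i+1$, so applying the min--max formula to $A+B$ with a subspace of $U \cap W$ of that dimension yields
\[
\lambda_i(A+B) \le \max_{0 \ne x \in U \cap W} \frac{x^{T}(A+B)x}{x^{T} x} \le \lambda_j(A) + \lambda_{i+1-j}(B).
\]
The second (lower) inequality follows by applying the first to $-A$ and $-B$ via the identity $\lambda_k(-M) = -\lambda_{n-k+1}(M)$, or equivalently from the dual max--min characterization.

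The reverse direction of the equality statement is elementary: if $x_0$ is a common eigenvector for $\lambda_j(A)$, $\lambda_{i+1-j}(B)$, and $\lambda_i(A+B)$, then $(A+B)x_0 = A x_0 + B x_0$ forces $\lambda_i(A+B) = \lambda_j(A) + \lambda_{i+1-j}(B)$. For the forward direction, suppose the upper bound is attained. Then there is a maximizer $x_0 \in U \cap W$ with $x_0^{T}(A+B)x_0 = \lambda_i(A+B)\, x_0^{T} x_0$, and combined with the two Rayleigh-quotient bounds one obtains
\[
\lambda_i(A+B)\,x_0^{T}x_0 = x_0^{T}(A+B)x_0 \le \bigl(\lambda_j(A) + \lambda_{i+1-j}(B)\bigr) x_0^{T}x_0 = \lambda_i(A+B)\,x_0^{T}x_0,
\]
which forces both Rayleigh-quotient bounds to be tight at $x_0$. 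Expanding $x_0$ in an orthonormal eigenbasis of $A$ restricted to $U$, tightness in $x_0^{T} A x_0 = \lambda_j(A)\,x_0^{T} x_0$ implies that only the components along the $\lambda_j(A)$-eigenspace contribute, so $A x_0 = \lambda_j(A) x_0$; the analogous argument for $B$ gives $B x_0 = \lambda_{i+1-j}(B) x_0$, and then $x_0$ is automatically a $\lambda_i(A+B)$-eigenvector of $A+B$ by addition.

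The inequality itself is a transparent dimension-counting consequence once the subspaces $U$ and $W$ are well chosen. The main technical point is the forward equality analysis, where one must argue that tightness of the Rayleigh-quotient bound on a possibly high-dimensional subspace forces $x_0$ into the specific eigenspace rather than elsewhere in $U$; this is handled cleanly by the spectral theorem applied to $A$ restricted to $U$, and the same for $B$ on $W$.
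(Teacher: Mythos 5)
The paper does not prove this lemma at all: it is quoted verbatim from the cited reference [Horn, Rhee, So, Linear Algebra Appl.\ 270 (1998)], so there is no in-paper proof to compare against. Your argument is correct and is essentially the standard proof of Weyl's inequalities via Courant--Fischer, together with the equality analysis that is the actual content of the Horn--Rhee--So paper: the dimension count $\dim(U\cap W)\ge n-i+1$ gives the inequality, and in the equality case the chain $\lambda_i(A+B)\le\max_{0\neq x\in U\cap W}x^{\top}(A+B)x/x^{\top}x\le\lambda_j(A)+\lambda_{i+1-j}(B)$ collapses, forcing a maximizer $x_0$ at which both Rayleigh bounds are tight; the spectral expansion of $x_0$ inside $U$ (resp.\ $W$) then correctly pins $x_0$ into the $\lambda_j(A)$- and $\lambda_{i+1-j}(B)$-eigenspaces, and $(A+B)x_0=\lambda_i(A+B)x_0$ follows by addition. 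Two small points you leave implicit but that do go through: the converse direction of the equality claim needs all three eigenvector conditions as hypotheses (which the lemma's phrasing supplies), and the equality characterization for the second inequality transfers from the first under the substitution $(A,B,i,j)\mapsto(-A,-B,n-i+1,n-j+1)$ because an eigenvector of $-M$ for $-\mu$ is an eigenvector of $M$ for $\mu$. You might state that last sentence explicitly, but there is no gap.
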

The following is a well-known result in nonnegative matrix theory.
\begin{lem}\label{lem2.5}
\textup{\cite{CveRS}} Let $A$ be an irreducible symmetric nonnegative matrix with spectral radius $\lambda$ and maximum row sum $r$. Then $\lambda\leq r$, with equality if and only if all row sums of $A$ are equal.
\end{lem}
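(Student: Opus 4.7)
The plan is to invoke the Perron--Frobenius theorem for irreducible nonnegative matrices, which supplies a strictly positive eigenvector for the spectral radius, and then read off both the bound and its equality case by comparing this eigenvector against its own maximum entry.

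First I would use symmetry of $A$ to conclude that $\lambda$ is the largest (real) eigenvalue, and invoke Perron--Frobenius to produce $x > 0$ with $Ax = \lambda x$. Normalizing so that $x_k = \max_i x_i = 1$, a one-line entrywise estimate gives
\[
\lambda = \lambda x_k = (Ax)_k = \sum_j A_{kj} x_j \;\le\; \sum_j A_{kj} = r_k \;\le\; r,
\]
which settles the inequality $\lambda \le r$.

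For the equality case, I would chase both of the above inequalities backwards. If $\lambda = r$, the second forces $r_k = r$, and the first forces $x_j = 1$ for every $j$ with $A_{kj} > 0$. Re-running the same argument from each such neighbor $j$ (whose eigenvector coordinate is also maximal) propagates $x_l = 1$ to each of its neighbors in turn; since $A$ is irreducible, the support graph is strongly connected, so the propagation reaches every index. Hence $x = \mathbf{1}$, which gives $A\mathbf{1} = \lambda\mathbf{1}$, so every row sum equals $\lambda = r$. Conversely, if all row sums equal $r$, then $\mathbf{1}$ is a positive eigenvector of $A$ with eigenvalue $r$, and the Perron--Frobenius theorem pins down the spectral radius as the unique eigenvalue admitting a positive eigenvector, forcing $\lambda = r$.

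The only genuinely delicate step is the propagation in the equality case: irreducibility has to be used to upgrade a local equality at one maximal-entry index to a global statement that $x$ is constant on all of $\{1,\dots,n\}$. Everything else is a short Rayleigh-type inequality once the Perron eigenvector is in hand, and the converse direction is a direct application of the uniqueness clause in Perron--Frobenius.
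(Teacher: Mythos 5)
Your proof is correct. The paper does not prove this lemma at all --- it is quoted from the textbook of Cvetkovi\'{c}, Rowlinson, and Simi\'{c} as a standard fact --- and your argument (evaluate $Ax=\lambda x$ at a maximal coordinate of the Perron eigenvector, then use irreducibility to propagate the equality $x_j=\max_i x_i$ along the support graph, with the converse following from the uniqueness clause of Perron--Frobenius) is exactly the standard textbook proof one would find there.
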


\subsection{Clique partitions}

A \textit{clique} in $G$ is a vertex set $Q\subseteq V(G)$ such that the subgraph induced on $Q$ is complete. A clique with $t$ vertices is called a \textit{$t$-clique}. A \textit{clique partition} \cite{ZhouBu2} of $G$ is a set $\mathcal{Q}=\{Q_1,\ldots,Q_v\}$ of cliques such that each edge of $G$ belongs to exactly one clique $Q_i$ of $\mathcal{Q}$, that is, any two distinct cliques
in $\mathcal{Q}$ have at most one common vertex. We say that $\mathcal{Q}$ is a \textit{$K_t$-decomposition} of $G$ if $|Q_i|=t$ for $i=1,\ldots,v$. The \textit{clique partition number} \cite{ZhouBu1} $\mbox{\rm cp}(G)$ is the smallest size of a clique partition of $G$.

Let $\Omega(\mathcal{Q})$ denote the \textit{clique graph} with vertex set $\mathcal{Q}=\{Q_1,\ldots,Q_v\}$ and edge set $\{\{Q_i,Q_j\}:i\neq j,Q_i\cap Q_j\neq\emptyset\}$. The \textit{$\mathcal{Q}$-degree} $d_u^\mathcal{Q}$ of a vertex $u$ in $G$ is the number of cliques in $\mathcal{Q}$ containing $u$.

The vertex-clique incidence matrix $B_\mathcal{Q}$ is a $|V(G)|\times v$ matrix with entries
\begin{eqnarray*}
(B_\mathcal{Q})_{ij}=\begin{cases}1~~~~~~~~~~~~~~~~~~\mbox{if}~i\in V(G),i\in Q_j,\\
0~~~~~~~~~~~~~~~~~~\mbox{if}~i\in V(G),i\notin Q_j.\end{cases}
\end{eqnarray*}
It follows easily that
\begin{equation}\label{BBT}
B_\mathcal{Q} B_\mathcal{Q}^\top=A(G)+D\mbox{~~and~~}B_\mathcal{Q}^\top B_\mathcal{Q}=A(\Omega(\mathcal{Q}))+E,
\end{equation}
where $D$ is the diagonal matrix satisfying $(D)_{uu}=d_u^\mathcal{Q}$ for each $u\in V(G)$, and $E$ is the diagonal matrix satisfying $(E)_{ii}=|Q_i|$ ($i=1,\ldots,v$).

\begin{exa}\label{ex1}
The graph $G$ in Figure \ref{clique} has a clique partition $\mathcal{Q}=\{Q_1,Q_2,Q_3\}$, where $Q_1=\{1,2,3\},Q_2=\{2,4,5\},Q_3=\{3,5,6\}$. The $\mathcal{Q}$-degrees are  $d_1^\mathcal{Q}=1$, $d_2^\mathcal{Q}=2$, $d_3^\mathcal{Q}=2$, $d_4^\mathcal{Q}=1$, $d_5^\mathcal{Q}=2$, $d_6^\mathcal{Q}=1$. The clique partition number of $G$ is $\mbox{\rm cp}(G)=3$.
\begin{figure}
\centering
\includegraphics*[height=4.6cm]{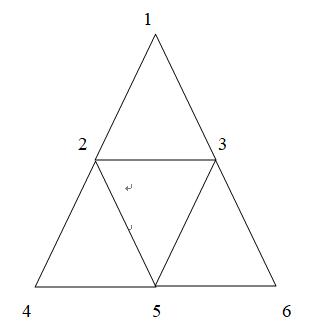}
\caption{\small Graph $G$ of Example \ref{ex1}}
\label{clique}
\end{figure}
\end{exa}

\subsection{Strongly regular graphs and block designs}

A \textit{strongly regular graph} \cite{Brouwer} with parameters $(n,d,\lambda,\mu)$ is a (non-complete) $d$-regular graph on $n$ vertices such that for every pair of adjacent vertices there are $\lambda$ vertices adjacent to both, and for every pair of non-adjacent vertices there are $\mu$ vertices adjacent to both.

A $2$-$(v,k,\lambda)$ design \cite{Brouwer} is a set of $v$ points, with a set of blocks (subsets of points) of size $k$ such that any two points meet in exactly $\lambda$ blocks. A $2$-$(v,k,1)$ design is also called a Steiner $2$-design. The \textit{block graph} of a Steiner $2$-design $\mathcal{D}$ is the graph whose vertex set is the set of blocks of $\mathcal{D}$, and two blocks adjacent when they meet in one point. It is known that block graphs of Steiner $2$-designs are strongly regular or complete \cite[\S 9.1.10]{Brouwer}.

As shown in the following observations, there is a one-to-one correspondence between special $K_t$-decompositions of regular graphs and Steiner $2$-designs.
\begin{obs}\label{obs1}
Let $\mathcal{Q}=\{Q_1,\ldots,Q_v\}$ be a $K_t$-decomposition of a $d$-regular graph $G$ such that $|Q_i\cap Q_j|=1$ for any $i\neq j$; in particular, the clique graph is complete.
For a vertex $u\in V(G)$, let $S_u=\{Q_i:u\in Q_i\}$ denote the set of $t$-cliques containing $u$. Let $\mathcal{D}$ be the design with point set $\mathcal{Q}$ and block set $\{S_u:u\in V(G)\}$. Then $\mathcal{D}$ is a $2$-$(v,\frac{d}{t-1},1)$ design and $G$ is the block graph of $\mathcal{D}$.
\end{obs}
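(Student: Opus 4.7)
The plan is to check the three defining conditions of a $2$-design for $\mathcal{D}$ and then verify that the induced bijection $V(G)\to\{S_u:u\in V(G)\}$ is actually a graph isomorphism between $G$ and the block graph of $\mathcal{D}$.

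I would first count block sizes by a local argument at each vertex $u$: every clique $Q_i\in S_u$ contributes exactly $t-1$ edges at $u$, and by the $K_t$-decomposition property these edges partition the neighbourhood of $u$, so $(t-1)|S_u|=d$ and hence $|S_u|=\frac{d}{t-1}$. Next, given two distinct ``points'' $Q_i,Q_j\in\mathcal{Q}$, the hypothesis $|Q_i\cap Q_j|=1$ singles out a unique vertex $u$ with $Q_i\cap Q_j=\{u\}$, and then $S_u$ is visibly the one and only block containing both $Q_i$ and $Q_j$, giving $\lambda=1$.

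The subtle step is checking that the blocks $S_u$ are genuinely distinct for distinct $u$, so that $\mathcal{D}$ really has $|V(G)|$ points in its block set. In the nontrivial case $v\geq 2$, any two cliques in $\mathcal{Q}$ already share a point, so some vertex lies in at least two cliques; combined with $d$-regularity and the block-size formula this forces $|S_u|\geq 2$ for every $u\in V(G)$. If $S_u=S_w$ for some $u\neq w$, picking any two cliques in this common family would yield $\{u,w\}\subseteq Q_i\cap Q_j$, contradicting $|Q_i\cap Q_j|=1$. Hence $u\mapsto S_u$ is injective.

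Finally, to identify $G$ with the block graph, observe that for $u\neq w$ the intersection $S_u\cap S_w$ consists of the cliques of $\mathcal{Q}$ containing both $u$ and $w$. The $K_t$-decomposition property says there is exactly one such clique when $uw\in E(G)$ and none otherwise, so $S_u$ and $S_w$ meet in a single point precisely when $u\sim w$ in $G$. Everything but the distinctness of blocks is a direct unwinding of definitions; that is the one place where the pairwise intersection hypothesis really gets used.
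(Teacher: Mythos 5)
Your proof is correct. The paper states this as an Observation and supplies no proof at all, so there is nothing to compare against; your direct verification is exactly the intended unwinding of definitions. Your three checks are all sound: $(t-1)|S_u|=d$ because the cliques through $u$ partition the $d$ edges at $u$; the unique vertex in $Q_i\cap Q_j$ gives $\lambda=1$; and the adjacency criterion for the block graph follows from each edge lying in exactly one clique. You are also right to flag the injectivity of $u\mapsto S_u$ as the one genuinely non-immediate step (the authors silently assume it), and your handling of it is correct: once some vertex lies in two cliques, regularity forces $|S_u|=d/(t-1)\geq 2$ for \emph{every} $u$, and then $S_u=S_w$ with $u\neq w$ would put two distinct vertices in the intersection of two distinct cliques, contradicting $|Q_i\cap Q_j|=1$. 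The only degenerate case, $v=1$ (i.e., $G=K_t$), is correctly excluded from this argument and is harmless for the statement.
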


\begin{obs}\label{obs2}
If $G$ is the block graph of a $2$-$(v,k,1)$ design $\mathcal{D}$, then the set of blocks incident to a given point $i$ of $\mathcal{D}$ corresponds to a $t$-clique $Q_i$ of $G$, where $t=k|V(G)|/v$. These cliques form a $K_t$-decomposition $\mathcal{Q}=\{Q_1,\ldots,Q_v\}$ of $G$, and $|Q_i\cap Q_j|=1$ for any $i\neq j$. Note also that a standard counting argument implies that $v-1=t(k-1)$.
\end{obs}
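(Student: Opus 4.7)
The plan is to verify, in sequence, four assertions: (i) for every point $i$ of $\mathcal{D}$, the set $Q_i$ of blocks containing $i$ is a clique of $G$ of size exactly $t=k|V(G)|/v$; (ii) the family $\{Q_1,\ldots,Q_v\}$ covers every edge of $G$ exactly once, i.e.\ forms a $K_t$-decomposition; (iii) $|Q_i\cap Q_j|=1$ for $i\neq j$; and (iv) the identity $v-1=t(k-1)$ holds. Throughout I would write $b=|V(G)|$ for the number of blocks of $\mathcal{D}$ and, for each point $i$, let $r_i$ be the replication number (the number of blocks through $i$).

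First I would carry out the standard two-way counts for a $2$-design. Fixing a point $i$ and counting pairs $(j,B)$ with $j\neq i$ and $\{i,j\}\subseteq B$ gives $r_i(k-1)=v-1$, so the replication number $r_i$ is independent of $i$; call it $r$. Counting incident (point, block) pairs then gives $bk=vr$, hence $r=bk/v=k|V(G)|/v=t$. Since any two blocks in $Q_i$ share the point $i$, they are adjacent in the block graph, so $Q_i$ is a clique, and $|Q_i|=r=t$. This settles (i), and at the same time yields the identity $v-1=t(k-1)$ required in (iv).

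Next, for (ii) and (iii), the key observation is that in a Steiner $2$-design any two distinct blocks meet in at most one point: two common points would, by $\lambda=1$, determine a unique common block, forcing the two blocks to coincide. So if $\{B,B'\}$ is an edge of $G$, the definition of the block graph gives a unique point $i\in B\cap B'$, whence $\{B,B'\}\subseteq Q_i$ and $\{B,B'\}\not\subseteq Q_j$ for $j\neq i$; thus each edge of $G$ lies in exactly one $Q_j$, proving the $K_t$-decomposition claim. For (iii), $Q_i\cap Q_j$ is precisely the set of blocks incident to both points $i$ and $j$, which by the $\lambda=1$ property has cardinality $1$.

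I do not foresee a genuine obstacle here: the statement is essentially a packaging of two standard facts about $2$-$(v,k,1)$ designs (constant replication number and pairwise block intersection at most one point) into the language of $K_t$-decompositions. The only point that needs a small moment of care is the direction of the correspondence — that every edge of the block graph is accounted for by exactly one $Q_i$ — which is precisely where the $\lambda=1$ hypothesis is used in an essential way.
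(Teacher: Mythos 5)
Your proof is correct and is exactly the standard design-theoretic argument (constant replication number $r=(v-1)/(k-1)=bk/v$, and the fact that two blocks of a Steiner $2$-design meet in at most one point) that the paper implicitly relies on when stating this as an unproved observation. Nothing is missing: all four claims, including $|Q_i|=t$, the unique clique covering each edge, $|Q_i\cap Q_j|=1$, and $v-1=t(k-1)$, are fully justified.
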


\subsection{The size of clique partitions}

Hoffman \cite{Hoffman} gave the following two spectral lower bounds for $\mbox{\rm cp}(G)$.
\begin{pro}\label{thm1.2}
\textup{\cite{Hoffman}} Let $G$ be a graph with no isolated vertices, and let $q$ be the number of eigenvalues of $G$ which are not $-1$. Then
\begin{eqnarray*}
\frac{\mbox{\rm cp}(G)(\mbox{\rm cp}(G)+1)}{2}\geq q,
\end{eqnarray*}
\end{pro}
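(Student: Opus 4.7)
The plan is to identify $q$ with $\mbox{\rm rank}(A(G)+I)$, since the $(-1)$-eigenspace of $A(G)$ is exactly $\ker(A(G)+I)$, and then bound this rank using a minimum clique partition $\mathcal{Q}=\{Q_1,\ldots,Q_v\}$ with $v=\mbox{\rm cp}(G)$. Write $\chi_i$ for the characteristic vector of $Q_i$, i.e., the $i$-th column of $B_\mathcal{Q}$, and $e_u$ for the standard basis vector at $u\in V(G)$.

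The key step is to express each row of $A(G)+I$ as a short linear combination. For a vertex $u$ with incidence set $S_u=\{i:u\in Q_i\}$ (so $d_u^\mathcal{Q}=|S_u|\geq 1$, using the no-isolated-vertex hypothesis), I claim that the $u$-th row of $A(G)+I$ equals
\begin{equation*}
\sum_{i\in S_u}\chi_i^\top-(d_u^\mathcal{Q}-1)e_u^\top.
\end{equation*}
This is verified entry by entry: for $w\neq u$ adjacent to $u$, there is a unique $Q_i\in\mathcal{Q}$ containing both (since distinct cliques of a partition share at most one vertex), so the expression has entry $1$; for $w\neq u$ non-adjacent to $u$ it gives $0$; and at $w=u$ it gives $d_u^\mathcal{Q}-(d_u^\mathcal{Q}-1)=1$.

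Consequently the row space of $A(G)+I$ lies in $\mbox{\rm span}\{\chi_1,\ldots,\chi_v\}+\mbox{\rm span}\{e_u:d_u^\mathcal{Q}\geq 2\}$, whose dimension is at most $v+|\{u:d_u^\mathcal{Q}\geq 2\}|$. To bound the second summand, I count pairs: each vertex $u$ contributes $\binom{d_u^\mathcal{Q}}{2}$ pairs $\{i,j\}$ of cliques meeting at $u$, and since distinct cliques share at most one vertex, each intersecting pair $\{Q_i,Q_j\}$ is counted exactly once. Hence $|\{u:d_u^\mathcal{Q}\geq 2\}|\leq\sum_{u}\binom{d_u^\mathcal{Q}}{2}\leq\binom{v}{2}$, using $\binom{k}{2}\geq 1$ whenever $k\geq 2$. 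Combining, $q=\mbox{\rm rank}(A(G)+I)\leq v+\binom{v}{2}=\binom{v+1}{2}$, as required.

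The main subtlety is that the naive bound $\mbox{\rm rank}(B_\mathcal{Q} B_\mathcal{Q}^\top)\leq v$ yields only $\mbox{\rm rank}(A(G)+D)\leq v$, which does not directly control $\mbox{\rm rank}(A(G)+I)$ when $D\neq I$ (e.g.\ on $P_3$ the rank is $3$ while $v=2$). The resolution is to augment the row-space span by standard basis vectors at the high-$\mathcal{Q}$-degree vertices and then absorb their count into the combinatorial $\binom{v}{2}$ coming from pairs of intersecting cliques; that pigeonhole-style counting identity is the heart of the argument.
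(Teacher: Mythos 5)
Your proof is correct. The paper states this proposition without proof, citing Hoffman, so there is no in-paper argument to compare against; your argument is in fact essentially the classical one. In matrix form, your row identity is exactly $A(G)+I=B_\mathcal{Q}B_\mathcal{Q}^\top-(D-I)$ with $D$ the diagonal matrix of $\mathcal{Q}$-degrees (cf.\ \eqref{BBT}), so subadditivity of rank gives $q=\mathrm{rank}(A(G)+I)\leq v+|\{u:d_u^\mathcal{Q}\neq1\}|$; the no-isolated-vertices hypothesis guarantees $d_u^\mathcal{Q}\geq1$, so the exceptional vertices are precisely those with $d_u^\mathcal{Q}\geq2$, and your double count of incidences between vertices and pairs of cliques (valid because two distinct cliques of a partition meet in at most one vertex) bounds their number by $\binom{v}{2}$. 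All steps check out, and the verification of the row identity entry by entry is complete and uses the defining property of a clique partition exactly where needed.
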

We note that this bound is equivalent to
\begin{eqnarray*}
\mbox{\rm cp}(G)\geq\frac{-1+\sqrt{8q+1}}{2}.
\end{eqnarray*}

\begin{pro}\label{thm1.3}
\textup{\cite{Hoffman}} Let $\lambda_{min}(G)$ be the minimum eigenvalue of a graph $G$. Then $\mbox{\rm cp}(G)\geq-\lambda_{min}(G)$.
\end{pro}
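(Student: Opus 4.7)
The plan is to exploit the matrix identity (\ref{BBT}), namely $B_\mathcal{Q} B_\mathcal{Q}^\top = A(G) + D$, which holds for any clique partition $\mathcal{Q}$ of $G$. Since $B_\mathcal{Q} B_\mathcal{Q}^\top$ is positive semidefinite, so is $A(G) + D$; this links $\lambda_{min}(G)$ to the $\mathcal{Q}$-degrees via a quadratic form.

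First, I would take a minimum clique partition $\mathcal{Q} = \{Q_1,\ldots,Q_v\}$ with $v = \mbox{\rm cp}(G)$, and let $x$ be a unit eigenvector of $A(G)$ for the eigenvalue $\lambda_{min}(G)$. Applying the PSD inequality to $x$, I get
\begin{equation*}
0 \;\leq\; x^\top (A(G) + D)\, x \;=\; \lambda_{min}(G) + x^\top D x,
\end{equation*}
which rearranges to $x^\top D x \geq -\lambda_{min}(G)$.

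Next, I would bound $x^\top D x$ from above in terms of $v$. Since $D$ is diagonal with entries $d_u^\mathcal{Q}$ and $\|x\| = 1$,
\begin{equation*}
x^\top D x \;=\; \sum_{u \in V(G)} d_u^\mathcal{Q}\, x_u^2 \;\leq\; \max_{u \in V(G)} d_u^\mathcal{Q}.
\end{equation*}
The crucial (and only really substantive) observation is that $d_u^\mathcal{Q} \leq v$ for every vertex $u$, simply because the cliques through $u$ form a subset of the $v$ cliques in $\mathcal{Q}$. Combining the two inequalities gives $v \geq -\lambda_{min}(G)$.

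I do not expect a genuine obstacle here: the proof is a direct PSD/quadratic-form argument once one notices that the diagonal $D$ records how often each vertex is covered, and that this count is trivially bounded by the number of parts. The only subtle step is the choice to evaluate at the eigenvector for $\lambda_{min}(G)$ rather than trying to apply Weyl's inequality (Lemma \ref{lem2.4}) to $A(G) + D$ directly, which would yield a bound involving only the minimum $\mathcal{Q}$-degree and is therefore too weak.
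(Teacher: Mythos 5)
Your proof is correct. The paper does not actually prove Proposition \ref{thm1.3} --- it is quoted from Hoffman \cite{Hoffman} --- but your argument (positive semidefiniteness of $B_\mathcal{Q} B_\mathcal{Q}^\top = A(G)+D$ from \eqref{BBT}, evaluated at a unit eigenvector for $\lambda_{min}(G)$, combined with the trivial bound $d_u^\mathcal{Q}\leq\mbox{\rm cp}(G)$) is exactly the standard one and fits the machinery the paper sets up. One small quibble: your closing remark that Weyl's inequality would be ``too weak'' is not right --- writing $A(G)=(A(G)+D)+(-D)$ and applying Lemma \ref{lem2.4} gives $\lambda_{min}(G)\geq\lambda_{min}(A(G)+D)-\lambda_1(D)\geq-\max_u d_u^\mathcal{Q}\geq-\mbox{\rm cp}(G)$, the same bound --- but this does not affect the validity of your proof.
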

Erd\H{o}s, Goodman, and P\'{o}sa \cite{Erdos} proved that every graph on $n$ vertices can be decomposed into at most $\lfloor n^2/4\rfloor$ complete graphs. In fact, they proved Proposition \ref{thm1.7} stated below.
Let $\mbox{\rm cp}^{(t)}(G)$ denote the minimum integer $m$ such that $G$ has a clique partition $\mathcal{Q}$ of size $|\mathcal{Q}|=m$ with every clique in $\mathcal{Q}$ having at most $t$ vertices. Clearly, $\mbox{\rm cp}^{(t)}(G)=\mbox{\rm cp}(G)$ when $t$ is at least the clique number of $G$. Erd\H{o}s et al. \cite{Erdos} obtained the following.
\begin{pro}\label{thm1.7}
\textup{\cite{Erdos}} Let $G$ be a graph on $n$ vertices. Then $\mbox{\rm cp}^{(3)}(G)\leq\lfloor n^2/4\rfloor$.
\end{pro}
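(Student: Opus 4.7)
The natural approach is induction on $n$, with base cases $n \leq 3$ verified by inspection. The key quantitative identity is $\lfloor n^2/4 \rfloor - \lfloor (n-1)^2/4 \rfloor = \lfloor n/2 \rfloor$, which specifies the budget of additional cliques available when a single vertex is added back into a partition.

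Pick a vertex $v$ of minimum degree $\delta$. If $\delta \leq \lfloor n/2 \rfloor$, apply the inductive hypothesis to $G - v$ to obtain a clique partition of size at most $\lfloor (n-1)^2/4 \rfloor$ with cliques of size at most $3$. Appending each of the $\delta$ edges at $v$ as a $2$-clique then yields a partition of $G$ of size at most $\lfloor (n-1)^2/4 \rfloor + \delta \leq \lfloor n^2/4 \rfloor$, as required.

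If instead $\delta > \lfloor n/2 \rfloor$, edge-by-edge augmentation overruns the budget, so triangles must be used to cover two edges at $v$ simultaneously. In this dense regime every edge lies in a triangle, because $|N(a) \cap N(b)| \geq \deg(a) + \deg(b) - n > 0$ for every edge $ab$. The plan is to select a matching $M = \{a_1 b_1, \ldots, a_k b_k\}$ inside the induced subgraph $G[N(v)]$, delete its edges from $G - v$, apply the inductive hypothesis to the resulting $(n-1)$-vertex graph to produce at most $\lfloor (n-1)^2/4 \rfloor$ cliques, and then augment with the $k$ triangles $\{v, a_i, b_i\}$ together with the remaining $\delta - 2k$ edges at $v$ as $2$-cliques. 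The resulting partition has size $\lfloor (n-1)^2/4 \rfloor + k + (\delta - 2k)$, which is at most $\lfloor n^2/4 \rfloor$ provided $k \geq \delta - \lfloor n/2 \rfloor$.

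The main obstacle is thus to guarantee a matching of size at least $\delta - \lfloor n/2 \rfloor$ inside $G[N(v)]$. This induced subgraph has $\delta$ vertices, and each of them has degree at least $2\delta - n$ there (by the same common-neighborhood estimate applied inside the neighborhood of $v$), so it is quantitatively dense. Feeding this minimum-degree bound into a matching lower bound of Erd\H{o}s--Gallai type should yield a matching of the required size; alternatively, an extremal choice of $v$ among minimum-degree vertices, or a local swap argument performed on the inductive partition of $G - v$, can close the remaining gap. This matching step is where the argument needs genuine care.
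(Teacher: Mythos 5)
The paper does not prove this proposition at all---it is quoted verbatim from Erd\H{o}s, Goodman, and P\'{o}sa \cite{Erdos}---so there is no in-paper proof to compare against; your attempt should be measured against the classical argument, and it is in fact essentially that argument (induction on $n$, deleting a minimum-degree vertex $v$, covering the edges at $v$ by a matching's worth of triangles plus single edges, and using the budget identity $\lfloor n^2/4\rfloor-\lfloor (n-1)^2/4\rfloor=\lfloor n/2\rfloor$). Your accounting is correct: you need a matching of size $k\geq\delta-\lfloor n/2\rfloor$ in $G[N(v)]$, and you correctly compute that $G[N(v)]$ has $\delta$ vertices and minimum degree at least $2\delta-n$, which is positive in the case $\delta>\lfloor n/2\rfloor$.

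The one step you leave open---the existence of that matching---does not need Erd\H{o}s--Gallai, an extremal choice of $v$, or a swap argument; it closes in one line. Take a \emph{maximal} matching $M$ in $H=G[N(v)]$ with $|M|=k$. If some vertex $u$ of $H$ is unmatched, maximality forces every neighbor of $u$ in $H$ to be matched, so $2k\geq\deg_H(u)\geq 2\delta-n$ and hence $k\geq\lceil\delta-n/2\rceil=\delta-\lfloor n/2\rfloor$; if every vertex of $H$ is matched, then $k=\delta/2\geq\delta-\lfloor n/2\rfloor$ since $\delta\leq n-1\leq 2\lfloor n/2\rfloor$. (Note $2k\leq\delta$ holds automatically, so the count $\delta-2k$ of leftover single edges at $v$ is nonnegative.) The unmatched vertices of $H$ simply get their edges to $v$ as $2$-cliques, the matched pairs give the triangles, the matching edges are removed before invoking the inductive hypothesis on the remaining $(n-1)$-vertex graph, and every edge is covered exactly once. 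With this insertion your proof is complete and is the standard proof of the cited result; the observation that every edge lies in a triangle in the dense regime is true but not needed.
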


\subsection{The total size of clique partitions}

For a clique partition $\mathcal{Q}=\{Q_1,\ldots,Q_v\}$ of $G$, the \textit{total size} of $\mathcal{Q}$ is defined as $s(\mathcal{Q})=\sum_{i=1}^v|Q_i|$.
We let $\mathcal{C}(G)$ denote the set of all clique partitions of $G$, and define $\pi(G)=\min_{\mathcal{Q}\in\mathcal{C}(G)}s(\mathcal{Q})$ as the minimum total size over all clique partitions of $G$.

Chung \cite{Chung}, Gy\H{o}ri and Kostochka \cite{Gyori}, and Kahn \cite{Kahn}, all independently, obtained the following upper bound for $\pi(G)$, which is a generalization of the (above) bound on the clique partition number by Erd\H{o}s et al. \cite{Erdos}.

\begin{pro}
\textup{\cite{Chung,Gyori,Kahn}} Let $G$ be a graph on $n$ vertices. Then $\pi(G)\leq n^2/2$.
\end{pro}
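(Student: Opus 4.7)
The plan is to induct on $n$, the number of vertices. The bases $n \le 2$ are immediate.

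For the inductive step, I would split into two cases depending on whether $G$ contains a triangle. When $G$ is triangle-free, every clique in $G$ has at most two vertices, so any clique partition of $G$ is forced to decompose $E(G)$ into $2$-cliques. This immediately gives $\pi(G) = 2|E(G)|$, and Mantel's theorem $|E(G)| \le \lfloor n^2/4 \rfloor$ yields $\pi(G) \le n^2/2$. This case is also the tight one, realised by complete bipartite graphs $K_{\lfloor n/2\rfloor,\lceil n/2\rceil}$, which shows the exponent and constant cannot be improved.

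When $G$ does contain a triangle, my plan is to remove a well-chosen vertex $v$ and apply the inductive hypothesis to $G-v$, paying a controlled price for covering the edges incident to $v$. The naive attempt is to take $v$ of minimum degree $\delta$, cover the $\delta$ incident edges by $2$-cliques (contributing $2\delta$ to the total), and use induction on $G-v$ to get $\pi(G) \le \lfloor (n-1)^2/2\rfloor + 2\delta$. A short calculation shows that this already suffices whenever $\delta$ is at most roughly $(n-1)/2$.

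The main obstacle is the dense regime $\delta > (n-1)/2$. There, any two vertices have a common neighbor and in particular every edge lies in a triangle, so the $2$-clique reduction becomes wasteful. The plan in this regime is to exploit triangle-density by choosing a matching $M$ in the induced neighborhood $G[N(v)]$: each edge $\{u,u'\} \in M$ gives rise to a triangle $\{v,u,u'\}$ of total size $3$ in $G$, which replaces the three separate $2$-cliques $\{v,u\}, \{v,u'\}, \{u,u'\}$ of combined total size $6$, thus saving $3$ per matched pair. Applying the inductive hypothesis to $G-v$ with the edges of $M$ already accounted for, one must verify that a matching of sufficient size exists in $G[N(v)]$ (using that the high-degree hypothesis forces many edges in the neighborhood) and that the total savings $3|M|$ absorbs the slack of the naive bound. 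The delicate point, which I expect to be the main technical hurdle, is this balancing in the dense case, together with parity issues arising from the fact that $\pi(G)$ is an integer while $n^2/2$ may be a half-integer; the extremal examples in Case 1 should guide the sharpness of the analysis.
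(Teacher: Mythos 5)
The paper only cites this result (to Chung, Gy\H{o}ri--Kostochka, and Kahn) and gives no proof, so I can only judge your sketch on its own terms; it has a genuine gap in the dense case. First, the accounting of the ``saving'' is wrong. If you cover the star at $v$ by triangles $\{v,u,u'\}$ for $\{u,u'\}\in M$ and by $2$-cliques for the unmatched neighbours, and then apply the inductive hypothesis to $G-v-M$, the new cliques contribute $3|M|+2\bigl(d(v)-2|M|\bigr)=2d(v)-|M|$, while the inductive term is still only bounded by $(n-1)^2/2$: deleting the edges of $M$ from $G-v$ does not lower the bound the induction hypothesis hands you. So the realised saving over the naive $2d(v)$ is $|M|$, not $3|M|$ --- two of the three ``replaced'' $2$-cliques were already counted in the star, and the third lived in $G-v$, whose inductive bound does not improve when that edge is removed. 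To actually collect a saving of $3$ per matched pair you would need a stronger induction hypothesis that is sensitive to the number of edges, which you have not formulated.

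Second, and more fundamentally, even a perfect matching in $G[N(v)]$ only brings the cost of covering the star at $v$ down to $2d(v)-\lfloor d(v)/2\rfloor=\lceil 3d(v)/2\rceil$, whereas the available increment is $n^2/2-(n-1)^2/2=n-\tfrac12$. Hence the argument cannot close once $d(v)>\tfrac{2(n-1)}{3}$, i.e.\ for every graph of minimum degree above roughly $2n/3$ --- including $K_n$ itself, complete multipartite graphs with small parts, and $K_n$ minus a sparse graph. These are precisely the graphs for which triangles do not suffice and one must attach cliques of unbounded size to $v$ (a partition of $N(v)$ into few cliques rather than a matching); and even that refinement requires $d(v)+k\leq n-1$, where $k$ is the number of cliques used, which is impossible when $d(v)=n-1$. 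Your triangle-free case and the sparse regime $\delta\leq(2n-1)/4$ are fine, but the heart of the theorem is exactly the dense regime your plan does not reach --- which is why this statement required the separate (and nontrivial) arguments of Chung, Gy\H{o}ri--Kostochka, and Kahn, the latter two proving a stronger, induction-friendly statement.
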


Let $\pi_t(G)$ denote the minimum integer $m$ such that $G$ has a clique partition $\mathcal{Q}$ of total size $s(\mathcal{Q})=m$ with every clique in $\mathcal{Q}$ having at most $t$ vertices. Kr\'{a}l' et al. \cite{Kral} proved the following result.
\begin{pro}\label{thm4.6}
\textup{\cite{Kral}} Let $G$ be a graph on $n$ vertices. Then $\pi_3(G)\leq(1/2+o(1))n^2$.
\end{pro}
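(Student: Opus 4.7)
First, observe that if a clique partition of $G$ into cliques of size at most $3$ uses $t$ triangles and $e$ edges, then $3t+e=|E(G)|$ while the total size is $3t+2e=2|E(G)|-3t$. Hence $\pi_3(G) = 2|E(G)|-3\tau(G)$, where $\tau(G)$ denotes the maximum number of edge-disjoint triangles in $G$, and the target bound becomes $\tau(G) \geq \tfrac{1}{3}\bigl(2|E(G)|-(1/2+o(1))n^2\bigr)$. When $|E(G)|\leq n^2/4$ the single-edge partition already achieves total size at most $n^2/2$, so I may assume $|E(G)| > n^2/4$. The plan is to construct a near-maximum edge-disjoint triangle packing of $G$ using Szemer\'edi's regularity lemma and the R\"odl nibble, and then to close the argument with a supersaturation-type inequality on the reduced graph.

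To build the packing, I apply the regularity lemma to $G$ with parameters $\epsilon\ll d$, producing a partition $V_0\cup V_1\cup\cdots\cup V_k$ with $|V_0|\leq\epsilon n$, together with the reduced graph $R$ on $\{1,\ldots,k\}$ in which $ij\in E(R)$ iff $(V_i,V_j)$ is $\epsilon$-regular of density at least $d$. The edges of $G$ that are intra-part, incident to $V_0$, in an irregular pair, or in a low-density regular pair number at most $O((\epsilon+d+1/k)n^2)=o(n^2)$; I cover these with single edges at total cost $o(n^2)$. For each triangle $ij\ell$ of $R$, the triple $(V_i,V_j,V_\ell)$ is a dense regular triple, so the R\"odl nibble (combined with a Haxell--R\"odl type near-integer triangle packing theorem, to make local packings edge-disjoint across overlapping triples) yields an edge-disjoint triangle packing covering all but $o(n^2)$ of the edges
\[ E_T := \{e\in E(G): e\text{ lies in a regular pair }(V_i,V_j)\text{ with }ij\text{ in some triangle of }R\}. \]
Writing $E_B$ for the remaining edges of $G$ in regular high-density pairs (those whose reduced edge is triangle-free in $R$), using triangles on $E_T$ and single edges on $E_B$ together with the discarded edges gives
\begin{equation*}
\pi_3(G) \leq |E_T|+2|E_B|+o(n^2) = |E(G)|+|E_B|+o(n^2).
\end{equation*}

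The hard part is then to show $|E(G)|+|E_B|\leq n^2/2+o(n^2)$. Since the edges of $R$ lying in no triangle of $R$ form a triangle-free subgraph, Mantel's theorem only yields the crude bound $|E_B|\leq n^2/4$, which is insufficient. The sharper combinatorial claim I need is that for every graph $R$ on $k$ vertices,
\begin{equation*}
|E(R)|+|E_B(R)|\leq k^2/2,
\end{equation*}
where $E_B(R)$ denotes the edges of $R$ not in any triangle of $R$; equivalently, if $|E(R)|=k^2/4+s$ for some $s\geq 0$, then at least $2s$ edges of $R$ belong to some triangle of $R$. I would attack this supersaturation-type inequality via the degree bound $d(u)+d(v)\leq k$ (valid for each triangle-free edge $\{u,v\}$ of $R$, since $u$ and $v$ share no neighbor), summed over $E_B(R)$ and balanced against $\sum_v d(v)=2|E(R)|$. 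The tight regime is near-bipartite $R$, in which both $|E(R)|$ and $|E_B(R)|$ approach $k^2/4$, mirroring the extremal case $G=K_{n/2,n/2}$ of the target bound.
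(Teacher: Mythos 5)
The paper offers no proof of Proposition \ref{thm4.6}: it is quoted from \cite{Kral}, where it is the main theorem (resolving a conjecture of Gy\H{o}ri and Tuza), so your sketch must be judged on its own. Your opening reduction is correct --- a partition into $t$ triangles and $e$ single edges has $3t+e=|E(G)|$ and total size $2|E(G)|-3t$, so minimizing $\pi_3$ amounts to maximizing an edge-disjoint triangle packing --- and the regularity/nibble/Haxell--R\"odl architecture is the right first half. The fatal gap is the claim that the nibble covers all but $o(n^2)$ of $E_T$. Inside a regular triple $(V_i,V_j,V_\ell)$ every triangle consumes one edge from each of the three pairs, so the number of edge-disjoint triangles the triple can supply is at most $(\min\{d_{ij},d_{i\ell},d_{j\ell}\}+o(1))m^2$, where $m$ is the part size; if the densities are unbalanced, say $(0.9,0.1,0.1)$, a constant fraction of the edges of the dense pair can never be covered even though $ij$ lies in a triangle of $R$. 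Hence the quantity that must be controlled is not ``which reduced edges lie in triangles'' but the maximum \emph{fractional} triangle packing of the \emph{density-weighted} reduced graph, and the inequality actually needed is of the form $2w(E(R))-3\nu^*(R,w)\le (1+o(1))k^2/2$ for every $[0,1]$-edge-weighted graph $(R,w)$. Your unweighted surrogate $|E(R)|+|E_B(R)|\le k^2/2$, even if true, does not imply this.

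Second, even that surrogate is only asserted. The attack you propose --- summing $d(u)+d(v)\le k$ over the triangle-free edges --- gives $\sum_v d_{E_B}(v)\,d(v)\le k|E_B|$, which via Cauchy--Schwarz recovers only Mantel's bound $|E_B|\le k^2/4$; you supply no argument that closes the gap to $|E(R)|+|E_B(R)|\le k^2/2$, let alone its weighted fractional analogue. This is not a presentational omission: the weighted extremal statement is precisely the hard core of \cite{Kral}, and the authors there prove it by a computer-assisted flag-algebra (semidefinite programming) computation. In short, your sketch correctly sets up the standard regularity reduction but then defers the entire difficulty of the theorem to an unproven, and as formulated insufficient, combinatorial claim.
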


\section{Spectral bounds for the size of clique partitions}\label{sec:size}

We now present our results on the size of clique partitions.

\begin{thm}\label{thm3.1}
Let $t \geq 2$ and let $G$ be a connected graph with minimum degree $\delta(G)$ and spectral radius $\rho(G)$. Then
\begin{eqnarray*}
\mbox{\rm cp}^{(t)}(G)\geq\rho(G)-t+1+\left\lceil\frac{\delta(G)}{t-1}\right\rceil,
\end{eqnarray*}
with equality if and only if $G$ is the block graph of a $2$-$(v,k,1)$ design, where $k=\frac{\delta(G)}{t-1}$ and $v=k|V(G)|/t$.
\end{thm}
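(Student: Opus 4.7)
The plan is to exploit the matrix identities (\ref{BBT}): since $B_\mathcal{Q} B_\mathcal{Q}^\top = A(G)+D$ and $B_\mathcal{Q}^\top B_\mathcal{Q} = A(\Omega(\mathcal{Q}))+E$ share the same nonzero eigenvalues, their spectral radii coincide. Fix an optimal partition $\mathcal{Q}=\{Q_1,\dots,Q_v\}$ with $v=\mbox{\rm cp}^{(t)}(G)$ and each $|Q_i|\leq t$; I intend to squeeze this common spectral radius from both sides using Weyl's inequality (Lemma~\ref{lem2.4}).

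On the $A(G)+D$ side, Weyl gives $\rho(A(G)+D)\geq\rho(G)+\min_u d_u^\mathcal{Q}$. A one-line count---each clique through $u$ contributes at most $t-1$ to $d_u$---yields $d_u^\mathcal{Q}\geq\lceil d_u/(t-1)\rceil\geq\lceil\delta(G)/(t-1)\rceil$. On the $A(\Omega(\mathcal{Q}))+E$ side, Weyl together with Lemma~\ref{lem2.3} (applied to $\Omega(\mathcal{Q})$, giving $\lambda_1(A(\Omega(\mathcal{Q})))\leq v-1$) and the trivial $\lambda_1(E)=\max_i|Q_i|\leq t$ give $\rho(A(\Omega(\mathcal{Q}))+E)\leq v-1+t$. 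Combining these with the equality of spectral radii produces $\rho(G)+\lceil\delta(G)/(t-1)\rceil\leq v-1+t$, which rearranges to the claimed inequality.

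The equality analysis is where I expect the real work. Equality forces every intermediate inequality to be tight simultaneously: (i) $\lambda_1(A(\Omega(\mathcal{Q})))=v-1$, so Lemma~\ref{lem2.3} gives $\Omega(\mathcal{Q})=K_v$; (ii) $\lambda_1(E)=t$; and (iii) both Weyl inequalities are tight. Because $G$ is connected, its Perron eigenvector is strictly positive and spans a one-dimensional eigenspace; the equality clause of Lemma~\ref{lem2.4} then forces this eigenvector to be an eigenvector of $D$ with eigenvalue $\lambda_{\min}(D)$, which coordinate-wise makes $d_u^\mathcal{Q}$ constant across $u$. Applying the analogous argument on the other side, now with the Perron eigenvector $\mathbf{1}$ of $K_v$, forces $|Q_i|$ constant, and combined with (ii) every $|Q_i|=t$---so $\mathcal{Q}$ is a $K_t$-decomposition. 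Completeness of $\Omega(\mathcal{Q})$ and the fact that two cliques in a partition cannot share two vertices force $|Q_i\cap Q_j|=1$ for all $i\neq j$. Writing $k:=d_u^\mathcal{Q}$, the identity $(t-1)k=d_u$ makes $G$ regular of degree $(t-1)k$, so $k=\delta(G)/(t-1)$, and the count $\sum_u d_u^\mathcal{Q}=\sum_i|Q_i|$ gives $v=k|V(G)|/t$; Observation~\ref{obs1} then identifies $G$ as the block graph of a $2$-$(v,k,1)$ design.

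For the converse, Observation~\ref{obs2} supplies the canonical $K_t$-decomposition of any such block graph, and regularity plus $|Q_i\cap Q_j|=1$ force $D=kI$, $E=tI$, $\Omega(\mathcal{Q})=K_v$; then (\ref{BBT}) collapses to $\rho(G)+k=v-1+t$, which combined with $\mbox{\rm cp}^{(t)}(G)\leq v$ gives equality. The main obstacle I foresee is the equality case: I must extract from Weyl's common-eigenvector condition the three simultaneous rigidities (constant $\mathcal{Q}$-degree, uniform clique size $t$, and complete clique graph) needed before Observation~\ref{obs1} can be applied.
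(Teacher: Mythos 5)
Your proposal is correct and follows essentially the same route as the paper: the identity $B_\mathcal{Q} B_\mathcal{Q}^\top=A(G)+D$, $B_\mathcal{Q}^\top B_\mathcal{Q}=A(\Omega(\mathcal{Q}))+E$, Weyl's inequality (Lemma \ref{lem2.4}) on both sides, Lemma \ref{lem2.3} for the clique graph, and the Perron-eigenvector rigidity in the equality case leading to Observations \ref{obs1} and \ref{obs2}. The three ``rigidities'' you flag as the foreseen obstacle are resolved exactly as you outline (positivity of the Perron vectors of $G$ and of $K_v$ forces $D$ and $E$ to be scalar), so there is no remaining gap.
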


\begin{proof}
Suppose that $\mathcal{Q}=\{Q_1,\ldots,Q_v\}$ is a clique partition of $G$ such that $v=\mbox{\rm cp}^{(t)}(G)$ and $|Q_i|\leq t$ for $i=1,\ldots,v$.  By \eqref{BBT}, Lemmas \ref{lem2.4} and \ref{lem2.3}, we have
\begin{eqnarray*}
\rho(G)+\left\lceil\frac{\delta(G)}{t-1}\right\rceil&\leq&\rho(G)+\min_{u\in V(G)}d_u^\mathcal{Q}\leq\lambda_1(B_\mathcal{Q} B_\mathcal{Q}^\top)=\lambda_1(B_\mathcal{Q}^\top B_\mathcal{Q})\\
&\leq&\rho(\Omega(\mathcal{Q}))+\max_{1\leq i\leq v}|Q_i|\leq v-1+t,
\end{eqnarray*}
so $$\mbox{\rm cp}^{(t)}(G)=v\geq\rho(G)-t+1+\left\lceil\frac{\delta(G)}{t-1}\right\rceil.$$
In case of equality, we first of all have that $\rho(\Omega(\mathcal{Q}))=v-1$, so the clique graph $\Omega(\mathcal{Q})$ is the complete graph $K_v$ by Lemma \ref{lem2.3}. Moreover, by Lemma \ref{lem2.4}, a positive Perron eigenvector of $\Omega(\mathcal{Q})$ is also an eigenvector for the diagonal matrix $E$ in \eqref{BBT}, which implies that $|Q_1|=\cdots=|Q_v|=t$ (since the corresponding eigenvalue is $\max_{1\leq i\leq v}|Q_i|=t$).
Similarly, Lemma \ref{lem2.4} implies that a (again positive) Perron eigenvector of $A(G)$ is an eigenvector for the smallest eigenvalue $\left\lceil\frac{\delta(G)}{t-1}\right\rceil =\min_{u\in V(G)}d_u^\mathcal{Q}$ of the diagonal matrix $D$ in \eqref{BBT}, hence $d_u^\mathcal{Q}=\left\lceil\frac{\delta(G)}{t-1}\right\rceil $ for all $u$. In particular, every vertex is in the same number of cliques $Q_i$, which then implies that $G$ is regular with degree $(t-1)\left\lceil\frac{\delta(G)}{t-1}\right\rceil $. Thus, $t-1$ divides $\delta(G)$, and $\mathcal{Q}$ is a $K_t$-decomposition of $G$ with a complete clique graph. By Observation \ref{obs1}, it follows that $G$ is the block graph of a $2$-$(v,k,1)$ design, where $k=\frac{\delta(G)}{t-1}$ and $v=k|V(G)|/t$.

On the other hand, it follows easily from Observation \ref{obs2} that the bound is tight for the block graph of a $2$-$(v,k,1)$ design.
\end{proof}

\begin{rem}
The extremal graphs $G$ attaining the lower bound in Theorem \ref{thm3.1}, the block graphs of $2$-$(v,k,1)$ designs, can be classified as follows using the eigenvalues of $G$. Indeed, from \eqref{BBT} it follows that $G$ has eigenvalues $\delta(G)$ (with multiplicity $1$), $t-1-k$ (multiplicity $v-1$) and $-k$ (multiplicity $|V(G)|-v$). We can now distinguish the followings cases depending on the vanishing of the latter two multiplicities.

(I) $v=1$: In this case, $G$ is complete; $G=K_t$ and $\mbox{\rm cp}^{(t)}(G)=1$.

(II) $v=|V(G)|$: In this case, $G$ is also complete, and it follows from Observation \ref{obs2} that $t=k$ and $v=t^2-t+1$. The corresponding design $\mathcal{D}$ is a projective plane of order $t-1$; so $G=K_{t^2-t+1}$, $\mbox{\rm cp}^{(t)}(G)=t^2-t+1$, and the $t$-cliques of an extremal clique partition are the lines of a projective plane (note however that this is the dual projective plane of $\mathcal{D}$). Note that projective planes are known to exist if the order is a prime power.

(III) $v\neq1,|V(G)|$: In this case, $G$ is a strongly regular graph with a $K_t$-decomposition and distinct eigenvalues $\delta(G)$, $t-1-k$, and $-k$.
\end{rem}
Case (II) is one of the two cases for a nontrivial extremal clique partition of the complete graph (case (I) is trivial), as characterized by De Bruijn and Erd\H{o}s \cite{Bruijn}; the other one is a partition of $K_v$ into a $(v-1)$-clique and the $v-1$ edges through the missing vertex.

The following are two examples for case (III).
\begin{exa}\label{triangular}
Let $v=t+1$. The pairs of a set of size $v$ form the blocks of a (trivial) $2$-$(v,2,1)$ design. The block graph of this design is the triangular graph $T(v)$ (the line graph of $K_v$). Let $Q_i$ be the set of pairs $\{i,j\}$, with $i\neq j$. These sets $Q_i$ with $i=1,\dots,v$ form an extremal clique partition; i.e., $\mbox{\rm cp}^{(t)}(G)=t+1$. Note that the sets $Q_i$ are the cliques of maximal size, so also $\mbox{\rm cp}(G)=t+1$.
\end{exa}

\begin{exa}\label{affine}
Let $k=t-1$. A $2$-$(k^2,k,1)$ design is equivalent to an affine plane of order $k$. Such affine planes are known to exist when $k$ is a prime power. The block graph of an affine plane is a complete $(k+1)$-partite graph $K_{(k+1) \times k}$. Let $Q_x$ be the set of $k+1$ lines through point $x$ of the affine plane. The set of $k^2$ cliques $Q_x$ forms an extremal clique partition, i.e., $\mbox{\rm cp}^{(t)}(G)=(t-1)^2$. Also here, the cliques have maximal size, so $\mbox{\rm cp}(G)=(t-1)^2$ as well.
\end{exa}

The above examples involve maximal cliques. In general, we obtain the following spectral bound for $\mbox{\rm cp}(G)$ from Theorem \ref{thm3.1} by involving the clique number $\omega(G)$ of $G$ and observing that $\mbox{\rm cp}(G)= \mbox{\rm cp}^{(\omega(G))}(G)$.
\begin{cor}\label{cor3.4}
Let $G$ be a connected graph with minimum degree $\delta(G)$, spectral radius $\rho(G)$, and clique number $\omega(G)$. Then
\begin{eqnarray*}
\mbox{\rm cp}(G)\geq\rho(G)-\omega(G)+1+\left\lceil\frac{\delta(G)}{\omega(G)-1}\right\rceil,
\end{eqnarray*}
with equality if and only if $G$ is the block graph of a $2$-$(v,k,1)$ design, where $k=\frac{\delta(G)}{\omega(G)-1}$ and $v=k|V(G)|/\omega(G)$.
\end{cor}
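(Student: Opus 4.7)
The plan is to specialize Theorem~\ref{thm3.1} by taking $t=\omega(G)$. The key observation, already flagged in the paragraph preceding the corollary, is that every clique of $G$ has at most $\omega(G)$ vertices, so every clique partition of $G$ is automatically a clique partition whose cliques have size at most $\omega(G)$. This gives the identity $\mbox{\rm cp}(G)=\mbox{\rm cp}^{(\omega(G))}(G)$, and plugging $t=\omega(G)$ into the inequality of Theorem~\ref{thm3.1} immediately yields the claimed lower bound.

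For the equality characterization, the ``only if'' direction is an instant consequence of Theorem~\ref{thm3.1} with $t=\omega(G)$: if equality holds in the bound of the corollary, then $G$ is forced to be the block graph of a $2$-$(v,k,1)$ design with $k=\delta(G)/(\omega(G)-1)$ and $v=k|V(G)|/\omega(G)$. For the ``if'' direction, suppose $G$ is the block graph of such a design. By Observation~\ref{obs2}, $G$ carries a $K_t$-decomposition with $t=k|V(G)|/v$; the relation $v=k|V(G)|/\omega(G)$ in the hypothesis forces $t=\omega(G)$, so the tightness half of Theorem~\ref{thm3.1} (the ``on the other hand'' paragraph of its proof) gives $\mbox{\rm cp}^{(\omega(G))}(G)=\rho(G)-\omega(G)+1+\lceil\delta(G)/(\omega(G)-1)\rceil$, and using the identity $\mbox{\rm cp}(G)=\mbox{\rm cp}^{(\omega(G))}(G)$ finishes the argument.

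I do not foresee a genuine obstacle: the corollary is essentially a cosmetic reformulation of Theorem~\ref{thm3.1}. The only mild bookkeeping point is to verify, in the converse direction, that the parameter matching $t=\omega(G)$ is automatic from the hypothesis on $v$; this is a one-line check as indicated above. All heavy lifting has already been done in the proof of Theorem~\ref{thm3.1}.
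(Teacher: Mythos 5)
Your proposal is correct and follows exactly the paper's (only sketched) argument: specialize Theorem~\ref{thm3.1} at $t=\omega(G)$ and use the identity $\mbox{\rm cp}(G)=\mbox{\rm cp}^{(\omega(G))}(G)$, with the equality characterization inherited from the theorem. The extra check that the hypothesis $v=k|V(G)|/\omega(G)$ forces $t=\omega(G)$ in the converse direction is a nice explicit touch that the paper leaves implicit.
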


\section{Spectral bounds for the total size of clique partitions}

We next consider the minimum total size of clique partitions.
\begin{thm}\label{thm3.7}
Let $t \geq 2$ and let $G$ be a connected graph with $n$ vertices, minimum degree $\delta(G)$, and spectral radius $\rho(G)$. Then
\begin{equation}\label{eq:totalsize}
\pi_t(G)\geq\rho(G)+(n-t+1)\left\lceil\frac{\delta(G)}{t-1}\right\rceil,
\end{equation}
with equality if and only if $G$ is regular with a $K_t$-decomposition.
\end{thm}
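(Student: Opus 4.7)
The approach is to refine the argument of Theorem~\ref{thm3.1}: rather than applying the crude bound $\rho(\Omega(\mathcal{Q}))\leq v-1$ from Lemma~\ref{lem2.3}, I use Lemma~\ref{lem2.5} to bound $\lambda_1(B_\mathcal{Q}^\top B_\mathcal{Q})$ by the maximum row sum of $B_\mathcal{Q}^\top B_\mathcal{Q}$. The crucial observation is that the $i$-th row sum of $B_\mathcal{Q}^\top B_\mathcal{Q}=A(\Omega(\mathcal{Q}))+E$ equals $\sum_{u\in Q_i}d_u^{\mathcal{Q}}$, as is immediate from the fact that $B_\mathcal{Q}\mathbf{1}$ is the vector of $\mathcal{Q}$-degrees. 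Since this quantity is built from $\mathcal{Q}$-degrees, it is exactly the right granularity to extract the total size $s(\mathcal{Q})=\sum_u d_u^{\mathcal{Q}}$ rather than just the number of cliques $v$.

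Let $\mathcal{Q}=\{Q_1,\dots,Q_v\}$ be an extremal clique partition for $\pi_t(G)$, with $|Q_i|\leq t$ for all $i$. Set $k:=\lceil\delta(G)/(t-1)\rceil$, and note that $d_u^{\mathcal{Q}}\geq k$ for every $u$ (from $\deg_G(u)\leq (t-1)\,d_u^{\mathcal{Q}}$ and integrality). Connectedness of $G$ implies connectedness of $\Omega(\mathcal{Q})$ by a short path argument, so $B_\mathcal{Q}^\top B_\mathcal{Q}$ is irreducible. Combining the Weyl-type bound used in the proof of Theorem~\ref{thm3.1} with Lemma~\ref{lem2.5}:
\[
\rho(G)+k \;\leq\; \rho(G)+\min_u d_u^{\mathcal{Q}} \;\leq\; \lambda_1(B_\mathcal{Q}B_\mathcal{Q}^\top) \;=\; \lambda_1(B_\mathcal{Q}^\top B_\mathcal{Q}) \;\leq\; \max_{1\leq i\leq v}\sum_{u\in Q_i}d_u^{\mathcal{Q}}.
\]
Let $Q^*$ attain the last maximum, so $\sum_{u\in Q^*}d_u^{\mathcal{Q}}\geq\rho(G)+k$. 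Writing $s(\mathcal{Q})=\sum_u d_u^{\mathcal{Q}}$ and splitting at $Q^*$, then using $|Q^*|\leq t$ and $d_u^{\mathcal{Q}}\geq k$ on the complement:
\[
\pi_t(G)=\sum_{u\in Q^*}d_u^{\mathcal{Q}}+\sum_{u\notin Q^*}d_u^{\mathcal{Q}}\;\geq\;(\rho(G)+k)+(n-t)k\;=\;\rho(G)+(n-t+1)k.
\]

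For the equality characterization, tightness in the Weyl bound (Lemma~\ref{lem2.4}) forces the positive Perron eigenvector of $A(G)$ to also be an eigenvector of $D$ with eigenvalue $\min_u d_u^{\mathcal{Q}}$, which (since the Perron vector is strictly positive) forces $D$ to be a scalar matrix; combined with $\min_u d_u^{\mathcal{Q}}\geq k$ this yields all $d_u^{\mathcal{Q}}=k$, hence $(t-1)\mid\delta(G)$ and $G$ is $(t-1)k$-regular. Tightness in Lemma~\ref{lem2.5} forces all row sums $k|Q_i|$ equal, so all $|Q_i|$ are equal; saturating $|Q^*|\leq t$ makes $\mathcal{Q}$ a $K_t$-decomposition. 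Conversely, for any regular $G$ with a $K_t$-decomposition one has $\rho(G)=(t-1)k$ and $s(\mathcal{Q})=vt=nk=\rho(G)+(n-t+1)k$, so the bound is attained. The main obstacle, and the key new insight beyond Theorem~\ref{thm3.1}, is spotting the row-sum identity for $B_\mathcal{Q}^\top B_\mathcal{Q}$ and replacing the bound from Lemma~\ref{lem2.3} by the tighter one from Lemma~\ref{lem2.5}; once this substitution is in place, the rest is a routine adaptation.
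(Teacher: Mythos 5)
Your proposal is correct and follows essentially the same route as the paper: the same chain $\rho(G)+\lceil\delta(G)/(t-1)\rceil\leq\lambda_1(B_\mathcal{Q}^\top B_\mathcal{Q})$, the same row-sum identity $\sum_i(B_\mathcal{Q}^\top B_\mathcal{Q})_{ji}=\sum_{u\in Q_j}d_u^{\mathcal{Q}}$ combined with Lemma~\ref{lem2.5}, and the same split of $\pi_t(G)=\sum_u d_u^{\mathcal{Q}}$ at the maximizing clique, with an identical equality analysis. The only cosmetic difference is that you rearrange the final inequality (adding $(n-t)k$ to the lower bound on the row sum rather than subtracting from $\pi_t(G)$), which changes nothing.
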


\begin{proof}
Let $\mathcal{Q}=\{Q_1,\ldots,Q_v\}$ be a clique partition of $G$ such that $s(\mathcal{Q})=\sum_{i=1}^v|Q_i|=\pi_t(G)$ and $|Q_i|\leq t$ for $i=1,\ldots,v$.
As in the proof of Theorem \ref{thm3.1}, we have that
\begin{equation}\label{eq2}
\rho(G)+\left\lceil\frac{\delta(G)}{t-1}\right\rceil\leq\rho(G)+\min_{u\in V(G)}d_u^\mathcal{Q}\leq\lambda_1(B_\mathcal{Q} B_\mathcal{Q}^\top)=\lambda_1(B_\mathcal{Q}^\top B_\mathcal{Q}).
\end{equation}
We are going to bound the right hand side of this equation by the maximal row sum (say of row $j$) of $B_\mathcal{Q}^\top B_\mathcal{Q}$, which equals
$$\sum_{i=1}^v (B_\mathcal{Q}^\top B_\mathcal{Q})_{ji}=\sum_{i=1}^v \sum_{u\in V(G)} (B_\mathcal{Q})_{uj}(B_\mathcal{Q})_{ui}=\sum_{u \in Q_j}d_u^\mathcal{Q}.$$

Note also that $\pi_t(G)=\sum_{i=1}^v|Q_i|=\sum_{u\in V(G)}d_u^\mathcal{Q}$. By Lemma \ref{lem2.5}, we then have that
\begin{eqnarray*}
\lambda_1(B_\mathcal{Q}^\top B_\mathcal{Q})&\leq&\sum_{u\in Q_j}d_u^\mathcal{Q}=\pi_t(G)-\sum_{u\notin Q_j}d_u^\mathcal{Q}\leq\pi_t(G)-(n-|Q_j|)\left\lceil\frac{\delta(G)}{t-1}\right\rceil\\
&\leq&\pi_t(G)-(n-t)\left\lceil\frac{\delta(G)}{t-1}\right\rceil.
\end{eqnarray*}
Together with \eqref{eq2}, we thus obtain the bound \eqref{eq:totalsize}.

In case of equality, we have that $d_u^\mathcal{Q}=\left\lceil\frac{\delta(G)}{t-1}\right\rceil$ for each $u\in V(G)$, just as in the proof of Theorem \ref{thm3.1}. Moreover, Lemma \ref{lem2.5} implies that $B_\mathcal{Q}^\top B_\mathcal{Q}$ has constant row sums, from which it then follows that all cliques $Q_i$ have the same size, which must be $t$. Thus, $G$ is regular with a
$K_t$-decomposition. On the other hand it is easy to check that if $G$ is regular with a
$K_t$-decomposition, then the bound is attained.
\end{proof}

Let $k_t(G)$ denote the maximum number of edge-disjoint $t$-cliques in $G$. By using Theorem \ref{thm3.7}, we can derive the following spectral bound for $k_t(G)$.
\begin{pro}\label{thm4.2}
Let $t>2$ and let $G$ be a connected graph with $n$ vertices, $m$ edges, minimum degree $\delta(G)$, and spectral radius $\rho(G)$. Then
\begin{equation}\label{eq:edgedisjoint}
k_t(G)\leq\frac{2m-\rho(G)-(n-t+1)\left\lceil\frac{\delta(G)}{t-1}\right\rceil}{t(t-2)},
\end{equation}
with equality if and only if $G$ is regular with a $K_t$-decomposition.
\end{pro}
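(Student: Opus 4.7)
The plan is to reduce Proposition \ref{thm4.2} to Theorem \ref{thm3.7} by turning a maximum set of edge-disjoint $t$-cliques into a valid clique partition of $G$ in a controlled way. Let $\mathcal{Q}_0$ be a collection of $k := k_t(G)$ edge-disjoint $t$-cliques; together they cover $k\binom{t}{2}$ edges of $G$. Extending $\mathcal{Q}_0$ by treating each of the remaining $m - k\binom{t}{2}$ edges as a $2$-clique produces a clique partition $\mathcal{Q}$ of $G$ all of whose cliques have size at most $t$, and a direct count gives
$$s(\mathcal{Q}) \;=\; kt + 2\left(m - k\binom{t}{2}\right) \;=\; 2m - k\,t(t-2).$$
Hence $\pi_t(G) \le s(\mathcal{Q}) = 2m - k_t(G)\,t(t-2)$. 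Combining this with the spectral lower bound on $\pi_t(G)$ from Theorem \ref{thm3.7} and dividing by $t(t-2) > 0$ (this is where the hypothesis $t > 2$ is used) yields inequality \eqref{eq:edgedisjoint}.

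For the equality characterization I would argue both directions. Equality in \eqref{eq:edgedisjoint} forces the chain of inequalities to collapse, so in particular the lower bound from Theorem \ref{thm3.7} is attained; by that theorem's equality case, $G$ must then be regular with a $K_t$-decomposition. Conversely, suppose $G$ is regular with a $K_t$-decomposition $\mathcal{Q}^\star$ of size $v$. Since $v\binom{t}{2} = m$ and any family of edge-disjoint $t$-cliques in $G$ contains at most $m/\binom{t}{2}$ members, we conclude $k_t(G) = v$, with $\mathcal{Q}^\star$ itself being such a maximum family. Choosing $\mathcal{Q}_0 = \mathcal{Q}^\star$ in the construction above leaves no residual edges, so $\mathcal{Q} = \mathcal{Q}^\star$ and $s(\mathcal{Q}) = vt$; by Theorem \ref{thm3.7} this equals $\pi_t(G)$, so the chain of inequalities is tight and equality holds in \eqref{eq:edgedisjoint}.

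No serious obstacle is anticipated: the argument is essentially an algebraic one-liner built on top of Theorem \ref{thm3.7}. The only points requiring a bit of care are that the leftover edges must be inserted as $2$-cliques (so that no clique of size larger than $t$ is created, keeping $\mathcal{Q}$ a valid competitor for $\pi_t$), and that the hypothesis $t > 2$ is essential for the final division by $t(t-2)$.
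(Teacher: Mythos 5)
Your proposal is correct and follows essentially the same route as the paper: augment a maximum family of edge-disjoint $t$-cliques with the leftover edges as $2$-cliques, bound $\pi_t(G)$ from above by the resulting total size $2m-k_t(G)\,t(t-2)$, and combine with Theorem \ref{thm3.7}, handling the equality case in both directions exactly as the authors do.
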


\begin{proof}
Consider a set of $k_t(G)$ edge-disjoint $t$-cliques in $G$. Together with the  $m-\frac{t(t-1)}{2}k_t(G)$ edges (seen as cliques of size $2$) not in any of these $t$-cliques, they form a clique partition $\mathcal{Q}$ of $G$. By Theorem \ref{thm3.7}, we thus have that
\begin{equation}\label{eq:derivation}
tk_t(G)+2m-t(t-1)k_t(G)\geq\pi_t(G)\geq\rho(G)+(n-t+1)\left\lceil\frac{\delta(G)}{t-1}\right\rceil,
\end{equation}
and the bound \eqref{eq:edgedisjoint} follows.
If equality holds, then equality should hold in \eqref{eq:totalsize} of Theorem \ref{thm3.7}, so $G$ is regular with a $K_t$-decomposition.

On the other hand, if $G$ is regular with a $K_t$-decomposition of size $v$, then $tv=\pi_t(G)$ and no edges are left, from which it follows that we have equality in \eqref{eq:derivation} and hence in \eqref{eq:edgedisjoint}, with $v=k_t(G)$.
\end{proof}

Just like in Section \ref{sec:size}, by involving the clique number $\omega(G)$, we obtain spectral bounds from Theorem \ref{thm3.7}, for $\pi(G)$ and $\mbox{\rm cp}(G)$. Here we use in addition that $\pi(G) \leq \omega(G)\mbox{\rm cp}(G)$, which is easy to show by considering a clique partition with size $\mbox{\rm cp}(G)$.
\begin{cor}\label{cor4.3}
Let $G$ be a connected graph with $n$ vertices, minimum degree $\delta(G)$, spectral radius $\rho(G)$, and clique number $\omega(G)$. Then
\begin{eqnarray*}
\omega(G)\mbox{\rm cp}(G)\geq\pi(G)\geq\rho(G)+(n-\omega(G)+1)\left\lceil\frac{\delta(G)}{\omega(G)-1}\right\rceil.
\end{eqnarray*}
with equality in the right inequality if and only if $G$ is regular with a $K_{\omega(G)}$-decomposition.
\end{cor}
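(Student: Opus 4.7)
The plan is to deduce both inequalities by very short arguments from material already developed in the paper; the corollary really is a corollary and there is no substantial new work. For the right-hand inequality, I would first observe that since $\omega(G)$ is the clique number of $G$, every clique of $G$ has at most $\omega(G)$ vertices, so the notion of a clique partition and the notion of a clique partition all of whose cliques have at most $\omega(G)$ vertices coincide. In particular, $\pi(G)=\pi_{\omega(G)}(G)$. Assuming $G$ has at least one edge (otherwise everything is trivially zero), we have $\omega(G)\geq 2$, so Theorem \ref{thm3.7} applies with $t=\omega(G)$, giving
\begin{eqnarray*}
\pi(G)=\pi_{\omega(G)}(G)\geq\rho(G)+(n-\omega(G)+1)\left\lceil\frac{\delta(G)}{\omega(G)-1}\right\rceil,
\end{eqnarray*}
with equality if and only if $G$ is regular with a $K_{\omega(G)}$-decomposition, exactly as claimed.

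For the left-hand inequality, I would pick a clique partition $\mathcal{Q}_0$ of $G$ that attains the clique partition number, so $|\mathcal{Q}_0|=\mbox{\rm cp}(G)$. Each $Q\in\mathcal{Q}_0$ satisfies $|Q|\leq\omega(G)$, and therefore
\begin{eqnarray*}
\pi(G)\leq s(\mathcal{Q}_0)=\sum_{Q\in\mathcal{Q}_0}|Q|\leq\omega(G)\,|\mathcal{Q}_0|=\omega(G)\mbox{\rm cp}(G),
\end{eqnarray*}
which is exactly the inequality $\omega(G)\mbox{\rm cp}(G)\geq\pi(G)$ already flagged in the sentence preceding the corollary.

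I do not expect any real obstacle. The only subtlety worth spelling out is the identity $\pi(G)=\pi_{\omega(G)}(G)$, which is what licenses plugging $t=\omega(G)$ into Theorem \ref{thm3.7}; once this is noted, both the inequality and its equality case transfer verbatim from Theorem \ref{thm3.7}. No equality characterization is claimed for the left inequality, which is consistent with the counting bound above being strict in general (a minimum clique partition need not consist entirely of maximum cliques).
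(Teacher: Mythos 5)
Your proposal is correct and matches the paper's own (implicit) argument exactly: the paper derives the right inequality by applying Theorem \ref{thm3.7} with $t=\omega(G)$, using that $\pi(G)=\pi_{\omega(G)}(G)$ since no clique exceeds the clique number, and obtains the left inequality by bounding the total size of a minimum clique partition by $\omega(G)\,\mbox{\rm cp}(G)$. Your extra care about $\omega(G)\geq 2$ and the transfer of the equality characterization is sound and adds nothing beyond what the paper intends.
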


Note that equality in the left inequality does not imply that $G$ is regular; see Example \ref{ex1}, which has $\omega(G)=3$, $\mbox{\rm cp}(G)=3$, and $\pi(G)=9$.

\section{Comparison to Hoffman's bounds}

\vspace{3mm}
\begin{table}
\begin{center}
\begin{tabular}{|c|c|c|c|c|c|}
  \hline
   $G$  & Cor.~\ref{cor3.4} & Cor.~\ref{cor4.3} & Prop.~\ref{thm1.2} & Prop.~\ref{thm1.3} & \mbox{\rm cp}($G$)\\
 \hline
   &  &  &  & &\\
  $K_{p \times a}$ & $p(a-1)+1$ & $a^2$  & $\frac{-1+\sqrt{8pa+1}}{2}$ & $a$ & unknown\\
   &  &  &  & &\\
 $\overline{C_{2s+1}}$ & $s+1$ & $\frac{4s+2}{s}$ & $\frac{-1+\sqrt{16s+9}}{2}$ & $2\cos\frac{2\pi}{2s+1}+1$ & unknown\\
   &  &   &  & &\\
 $T(v)$ & $v$ & $v$  & $v-1$ & $2$ & $v$\\
   &  &  &  & &\\
 $F_v$ & $\frac{-1+\sqrt{8v+1}}{2}$ & $\frac{4v-1+\sqrt{8v+1}}{6}$  & $\frac{-1+\sqrt{8v+9}}{2}$ & $\frac{-1+\sqrt{8v+1}}{2}$ & $v$\\
   &  &  &  & &\\
  \hline
\end{tabular}
\caption{Spectral lower bounds for the clique partition number}\label{table1}
\end{center}
\end{table}

In Table \ref{table1}, we list the spectral lower bounds for the clique partition number from Propositions \ref{thm1.2} and \ref{thm1.3} (by Hoffman \cite{Hoffman}) and our Corollaries \ref{cor3.4} and \ref{cor4.3} for several classes of graphs. The last column gives the exact value of the clique partition number. We consider the following graphs.

(i) The complete $p$-partite graph $K_{p\times a}$ ($p>1,a>1$), which has distinct eigenvalues $(p-1)a,0,-a$ and clique number $p$. The clique partition number is at least $a^2$, with equality if and only if there is an orthogonal array OA$(a,p)$ (or equivalently, a so-called transversal design TD$(p,a)$ \cite{Brouwer}). This is an array with $p$ rows and $a^2$ columns containing $a$ symbols such that for any two rows, each of the $a^2$ pairs of symbols occurs precisely once \cite{Brouwer}; each of the columns corresponds to a clique in the $K_{a}$-decomposition. Note that an orthogonal array OA$(a,a+1)$ is equivalent to an affine plane of order $a$. See also Example \ref{affine}.

(ii) The complement $\overline{C_{2s+1}}$ ($s>1$) of an odd cycle, which has eigenvalues $2s-2$ and $-2\cos\frac{2\pi j}{2s+1}-1$ ($j=1,\ldots,2s$) and clique number $s$. For information (and references) on the clique partition number of (among others) the complement of the cycle, we refer to the thesis of Cavers \cite{Cavers}, who showed that the clique partition number of the complement of $C_n$ is $O(n \log\log n)$, while also conjecturing that it is of order $n$.

(iii) The triangular graph $T(v)$ ($v>3$), which has distinct eigenvalues $2v-4,v-4,-2$ and clique number $v-1$. See also Example \ref{triangular}.

(iv) The friendship graph $F_v$ ($v\geq2$) consisting of $v$ edge-disjoint triangles that meet in one vertex. It has distinct eigenvalues $\frac{1\pm\sqrt{8v+1}}{2},\pm1$, with $-1$ having  multiplicity $v$ \cite{Cioaba} and clique number $3$.

\vspace{3mm}
\noindent
\textbf{Acknowledgements.}

\vspace{3mm}
The authors would like to thank the reviewers for giving valuable suggestions. This work is supported by the National Natural Science Foundation of China (No. 11801115 and No. 12071097) and the Fundamental Research Funds for the Central Universities.

\end{document}